\newtheorem{theorem}{Theorem}[section]
\newtheorem{lemma}[theorem]{Lemma}
\newtheorem{proposition}[theorem]{Proposition}
\newtheorem{corollary}[theorem]{Corollary}
\theoremstyle{definition}
\newtheorem{definition}[theorem]{Definition}
\theoremstyle{remark}
\newtheorem{remark}[theorem]{Remark}
\newcommand{\RN}[1]{%
  \textup{\uppercase\expandafter{\romannumeral#1}}%
}
\numberwithin{equation}{section}
\newcommand\norm[1]{\left\lVert#1\right\rVert}
\newcommand\numberthis{\addtocounter{equation}{1}\tag{\theequation}}
\providecommand{\eat}[1]{}
\newcommand{\Hm}[1]{\leavevmode{\marginpar{\tiny%
$\hbox to 0mm{\hspace*{-0.5mm}$\leftarrow$\hss}%
\vcenter{\vrule depth 0.1mm height 0.1mm width \the\marginparwidth}%
\hbox to 0mm{\hss$\rightarrow$\hspace*{-0.5mm}}$\\\relax\raggedright
#1}}}
\begin{document}

\title[Essential Self-Adjointness Characterizations]{Essential Self-Adjointness of Semi-Bounded Schr\"{o}dinger Operators on Birth-Death Chains }

\author{Sean Ku}
\address{S.~Ku, New York University, Courant Institute of Mathematical 
Sciences, 
251 Mercer St, New York, NY 10012}
\email{sk8980@nyu.edu}


\date{\today}
\thanks{S.~K.~is supported by the National Science Foundation funded Queens Experiences in 
Discrete Mathematics (QED) REU program, Award Number DMS 2150251.}

\begin{abstract}
 We study the essential self-adjointness of semi-bounded Schr\"{o}dinger operators on birth-death chains. First, we offer a general characterization which originates from studying a second order linear recurrence with variational coefficients which comes from the Schr\"{o}dinger operator. As this characterization is algebraically complicated, we  present an additional theorem discussing the failure of essential self-adjointness. Finally, we study two specific cases of solutions to equations involving the Schr\"{o}dinger operator over birth-death chains and derive explicit formulas in these cases. 
\end{abstract} 

\maketitle
\tableofcontents

\section{Introduction}  
     Essential self-adjointness is an extensively studied topic in mathematical physics. This property naturally arises in quantum mechanics and quantum observables \cite{Cintio_2021}. In particular, of the variety of operators studied, elliptic differential operators have been studied in the form of the Laplacian and Schr\"{o}dinger operators over settings such as the real numbers and smooth manifolds \cite{Grigoryan_2009, Reed_Simon_1975}.  \smallskip 
    
 \indent The aim of this note is to study the essential self-adjointness of semi-bounded Schr\"{o}dinger operators on birth-death chains, i.e., graphs with the structure of the natural numbers. As this analysis requires the understanding of solutions to equations involving the Schr\"{o}dinger operator, we introduce techniques and formulas to understand such solutions over birth-death chains. We introduce two methods to inspect essential self-adjointness of  Schr\"{o}dinger operators. The first method requires purely algebraic calculations and offers a full classification in terms of the graph structure. It turns out that such a method provides a classification which, while true, yields a convoluted and impractical expression to evaluate essential self-adjointness. The second method transforms the Schr\"{o}dinger operator problem to a matrix recurrence relation, which allows for the use of standard tools of linear algebra. We also provide alternative methods in studying solutions to the equation involving Schr\"{o}dinger operator through generating functions. \smallskip 

The rest of this note is structured in the following manner. Section \ref{Section 2} introduces the necessary definitions and background. Section \ref{Section: Characterization of ESA from Solving Linear Recurrence} derives a full characterization of essential self-adjointness of semi-bounded Schr\"{o}dinger operators on birth-death chains. In Section \ref{Section: Additional Results}, we derive an additional result on failure of essential self-adjointness. Finally, in Section \ref{Section: Closed Form Solutions}, we discuss closed form solutions and generating functions for two specific cases of the Schr\"{o}dinger operator.

\section{Preliminaries} \label{Section 2}
\subsection{The Graph Setting}
Let $X$ be a discrete and countable set and let $b: X \times X \to [0,\infty)$ be the \textbf{edge-weight} denoting a symmetric function such that $b(x,x)=0$ for all $x \in X$ and which satisfies the locally summable condition $\sum_{y \in X} b(x,y)  < \infty$ for all $x\in X$. Next we introduce the \textbf{measure} as a function $m: X \to (0, \infty)$ which extends to all subsets of $X$ by additivity. We refer to the triple $(X,b,m)$ as a \textbf{weighted graph}. 
\begin{definition}
    [Birth-Death Chains] A weighted graph is called a \textbf{birth-death chain} if $X = \mathbb{N}_0=\{0,1,2,...\}$ and $b(x,y) >0$ if and only if $|x-y| =1$.
\end{definition} 
Functions on graphs have a natural Hilbert space structure as follows: First, we denote the function space \[ C(X) = \{ f \text{ }| \text{ }  f : X \to \mathbb{R} \}. \] Let $C_c(X)$ denote the space of functions with finite support over $X$, that is, \begin{equation*}
    C_c(X) = \{ f \in C(X) \text{ }| \text{ } f(x) \neq 0 \text{  for finitely many } x \in X \}. 
\end{equation*} 
The space of square summable functions
\begin{equation*}
   \ell^2(X,m) = \{ f \in C(X) \text{ } | \text{ } \sum_{x \in X} f(x)^2m(x) < \infty \}
\end{equation*} 
forms a Hilbert space under the inner product given by $\braket{f,g} = \sum_{ x \in X }  f(x) g(x)m(x)$ for $f,g \in \ell^2(X,m).$ In addition, we note that the space $C_c(X)$ is dense in $\ell^2(X,m)$.
\subsection{Essential Self-Adjointness of Operators on Hilbert Spaces  } \label{Subsection: SelfAdjointness Theory}
We now introduce some basic theory of Hilbert spaces. Let $H$ denote a Hilbert space with inner product given by $\braket{\cdot, \cdot}$.  Let $A$ be a linear map from $\text{Dom}(A) \subseteq H$ to $H$. Here, we refer to $\text{Dom}(A)$ as the \textbf{domain} of the operator $A$. Note that $A$ is not necessarily a bounded operator. Next we define the graph of an operator as follows. 
\begin{definition}
    [Graph of an Operator] Let $A: \text{Dom}(A) \to H$ be an operator on a Hilbert space $H$. We define its \textbf{graph}, denoted as $\Gamma(A)$, as the set \[ \Gamma(A) = \{ (f,Af) \text{ } | \text{ } f \in \text{Dom}(A) \} \subseteq H \times H. \]
\end{definition}
 We say $A$ is a \textbf{closed operator} if $\Gamma(A)$ is closed.  For two operators $A$ and $B$ we say that $B$ is an \textbf{extension} of $A$, or $B \supseteq A$, if $\text{Dom}(A) \subseteq \text{Dom} (B)$ and $B|_{\text{Dom}(A)} = A$. In particular, this implies $\Gamma(A) \subseteq \Gamma(B).$

\begin{definition}
    [Closure of an Operator] Let $A: \text{Dom}(A) \to H$ be an operator on a Hilbert space $H$. We denote $\overline{A}$ as the \textbf{closure of $A$} which is defined as the smallest \textbf{closed} extension of $A$. 
\end{definition}
Consider the case where $A$ has a dense domain $\text{Dom}(A)$. Then we define the \textbf{domain of the adjoint}, or $\text{Dom}(A^*)$, as the set \[ \text{Dom}(A^*) = \{ f \in H \text{ }  | \text{ } h \mapsto \braket{f,Ah} \text{ is a continuous linear functional over Dom}(A)  \}. \] 
By the Riesz Representation Theorem, we can redefine as $\text{Dom}(A^*)$ as 
the set of $f \in H$ for which there exists an $g \in H$ such that \[ \braket{f,Ah} = \braket{g,h} \qquad \text{for all } h \in \text{Dom} (A).\]
We can then define $A^*$, the \textbf{adjoint} of $A$, as the mapping $\text{Dom}(A^*) \to H$ such that $A^* f = g$. In particular, we have \[ \braket{f,Ah} = \braket{A^*f,h} \quad \text{for all } h \in \text{Dom}(A),\text{ } f \in \text{Dom}(A^*). \] 

Next we define symmetric operators. 
\begin{definition}
    [Symmetric and Self-Adjoint Operators] Let $A: \text{Dom}(A) \to H$ be an operator on a Hilbert space $H$ with dense domain. We say $A$ is \textbf{symmetric} if $A^*$ is an extension of $A$. Equivalently, $A$ is symmetric if and only if \[ \braket{Af,g} = \braket{f,Ag} \qquad \text{ for all } f,g \in \text{Dom}(A). \] Furthermore, an operator $A$ is \textbf{self-adjoint} if $A=A^*.$
\end{definition}
Finally, we can define essential self-adjointness as the following: 
\begin{definition}
    [Essential Self-Adjointness]  Let $A: \text{Dom}(A) \to H$ be a symmetric operator on a Hilbert space $H$. We say that $A$ is \textbf{essentially self-adjoint} if $\overline{A}$ is self-adjoint, or equivalently, if $A$ has a unique self-adjoint extension.
\end{definition}
In this note, we will be especially concerned with \textbf{semi-bounded operators}, that is, operators $A$ for which there exists $K \in \mathbb{R}$ such that \[ \braket{Af,f} \geq K \braket{f,f} \qquad \text{for all } f \in \text{Dom}(A).\] We also say that $A$ is \textbf{strictly positive} if the constant $K$ is strictly positive. To prove essential self-adjointness of semi-bounded operators, we will make use of the following theorem from \cite{Reed_Simon_1975}: 
\begin{theorem} \label{Simon Reed}
  If $A$ is a strictly positive symmetric operator, then $A$ is essentially self-adjoint if and only if $\text{Ker}(A^*) = 0$. 
\end{theorem}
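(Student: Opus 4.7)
The plan is to exploit strict positivity twice: as an \emph{a priori} bound $\|Af\| \geq K\|f\|$ on $\text{Dom}(A)$ and again on the closure $\overline{A}$ by continuity. First I would record two standard ingredients: (i) $A$ is closable (since $A \subseteq A^{*}$ guarantees $\overline{A}$ exists) with $(\overline{A})^{*} = A^{*}$, and (ii) $\text{Ker}(A^{*}) = \text{Ran}(A)^{\perp}$ by the definition of the adjoint together with density of $\text{Dom}(A)$. These identities let me translate freely between kernels, ranges, and closures.

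For the forward implication, I would start from $A$ essentially self-adjoint, so $\overline{A} = A^{*}$. Extending the semi-bounded estimate from $\text{Dom}(A)$ to $\text{Dom}(\overline{A})$ by continuity and applying Cauchy--Schwarz yields $\|\overline{A}f\| \geq K\|f\|$, so $\overline{A}$ is injective. Since $\overline{A} = A^{*}$, this is precisely $\text{Ker}(A^{*}) = 0$, and this direction takes essentially one line once the estimate is in hand.

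For the reverse implication I would proceed in two stages. First, the hypothesis $\text{Ker}(A^{*}) = 0$ combined with $\text{Ker}(A^{*}) = \text{Ran}(A)^{\perp}$ shows that $\text{Ran}(A)$, and hence $\text{Ran}(\overline{A})$, is dense in $H$. Second, the bound $\|\overline{A}f\| \geq K\|f\|$ together with the closedness of $\overline{A}$ forces $\text{Ran}(\overline{A})$ to be closed: a Cauchy sequence $\overline{A}f_n$ pulls back via the estimate to a Cauchy sequence $f_n$, whose limit lies in $\text{Dom}(\overline{A})$ by closedness. Together these yield $\text{Ran}(\overline{A}) = H$.

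The step I expect to be the main obstacle is the final one: deducing that a densely defined, symmetric, surjective operator $B := \overline{A}$ is self-adjoint. The argument I have in mind runs as follows: for any $f \in \text{Dom}(B^{*})$, surjectivity lets me choose $g \in \text{Dom}(B)$ with $Bg = B^{*}f$; the symmetry of $B$ then gives $\braket{f - g, Bh} = 0$ for every $h \in \text{Dom}(B)$, and surjectivity of $B$ applied once more promotes this to $f = g \in \text{Dom}(B)$. This shows $\text{Dom}(B^{*}) \subseteq \text{Dom}(B)$, and combined with the symmetric inclusion $B \subseteq B^{*}$ we obtain $B = B^{*}$. Hence $\overline{A}$ is self-adjoint, which is exactly essential self-adjointness of $A$.
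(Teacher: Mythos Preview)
Your argument is correct and is essentially the standard proof: strict positivity gives the coercivity bound $\|\overline{A}f\|\geq K\|f\|$, which together with $\text{Ker}(A^{*})=\text{Ran}(A)^{\perp}=0$ forces $\text{Ran}(\overline{A})=H$, and your final pull-back argument cleanly shows that a closed symmetric surjective operator is self-adjoint. There is nothing in the paper to compare against, however: the paper does not prove this theorem but simply quotes it from Reed--Simon \cite{Reed_Simon_1975} as background.
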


\subsection{The Laplacian on Graphs}  \label{Section: Laplacian ESA}
To discuss essential self-adjointness of Schr\"{o}dinger operators on graphs, it is necessary first to introduce the formal Laplacian. In the graph setting, let $\mathcal{F}$ be the set of functions defined by: \begin{equation*}
    \mathcal{F} = \{ f \in C(X) \text{ } | \sum_{y \in X} b(x,y) |f(y)| < \infty \text{ for every } x \in X\}.
\end{equation*}  
Note that for birth-death chains we have $C(X) = \mathcal{F}.$

\begin{definition}
    [Formal Laplacian on Graphs] Let $(X,b,m)$ be a weighted graph. The \textbf{formal Laplacian} is mapping $\Delta: \mathcal{F} \to C(X)$ defined by \begin{equation*}
        \Delta f(x) = \dfrac{1}{m(x)} \sum_{ y \in X} b(x,y) (f(x) - f(y)).
    \end{equation*}
\end{definition}
In the case of birth-death chains, the Laplacian yields a simpler form:
\begin{equation*}
        \Delta f(x) = \dfrac{1}{m(x)} \left( b(x,x+1) \left( f(x) - f(x+1) \right) + b(x,x-1) \left(f(x) - f(x-1) \right) \right).
    \end{equation*} 
   On birth-death chains, when restricted to functions of finite support, the Laplacian maps to square summable functions, i.e.,  $\Delta (C_c(\mathbb{N}_0))  \subseteq \ell^2(\mathbb{N}_0,m)$. Thus, the restriction of the Laplacian on finitely supported functions, $L_c = \Delta|_{C_c(\mathbb{N}_0)}$, is an operator on $\ell^2(\mathbb{N}_0,m)$. A direct calculation then gives the Green's formula: \begin{equation*}
        Q(\varphi, \phi) = \braket{L_c \varphi, \phi} = \braket{\varphi,L_c \phi}  \label{Green's Formula}
    \end{equation*}
    where $\braket{\cdot, \cdot}$ denotes the inner product on $\ell^2(\mathbb{N}_0,m)$ and $Q$ is the energy form given by: \begin{align*}
        Q(\varphi, \phi) &= \dfrac{1}{2} \sum_{x, y \in \mathbb{N}_0} b(x,y) (\varphi(x) - \varphi(y)) (\phi(x)- \phi(y)) \\ 
        &= \sum_{r=0}^{\infty} b(r,r+1) ( \varphi(r) - \varphi(r+1)) (\phi(r) - \phi(r+1)
    \end{align*}
    for $\varphi,\phi \in C_c(\mathbb{N}_0)$. 
    Thus, $L_c$ is a symmetric operator over $C_c(\mathbb{N}_0)$ and, by definition, $L_c$ is essentially self-adjoint if $L_c$ has a unique self-adjoint extension, which is to say the closure $\overline{L_c}$ is self-adjoint. Furthermore, we have that $L_{c}^*$ has domain: \[ \text{Dom} (L_{c}^*) = \{ f \in \ell^2(\mathbb{N}_0,m) \text{ } | \text{ } \Delta f \in \ell^2(\mathbb{N}_0,m) \}. \]
    
    By general theory \cite{Reed_Simon_1975}, the essential self-adjointness of $L_c$ is equivalent to the triviality of $\lambda$-harmonic functions, that is to say functions $f \in \mathcal{F}$ such that \begin{equation*}
        \Delta f = \lambda f \qquad \text{for } \lambda < 0. 
    \end{equation*}
    For $\lambda=0,$ we call such functions \textbf{harmonic.} Thus, $L_c$ is essentially self-adjoint if and only if  \begin{equation*}
        \text{Ker} (L_{c}^* - \lambda) = \{0\}. 
    \end{equation*}
    for one (or all) $\lambda < 0$. The essential self-adjointness of the Laplacian on birth-death chains is a classically known result by Hamburger \cite{Ham20}, with a novel proof available in \cite{IKMW}:  
    \begin{theorem}\label{Hamburger's Criterion}
        Let $(\mathbb{N}_0,b,m)$ be a birth death chain. Then the following statements are equivalent: 
        \begin{enumerate}
            \item $L_c$ is essentially self-adjoint. 
            \item $\displaystyle \sum_{k=0}^{\infty} \left( \sum_{r=0}^k \dfrac{1}{b(r,r+1)} \right)^2 m(k+1) = \infty$.
        \end{enumerate}
    \end{theorem}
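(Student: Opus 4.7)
The plan is to reduce the essential self-adjointness question to the $\ell^2$-integrability of a single, explicit solution of a recurrence via Theorem~\ref{Simon Reed}. Green's formula shows $L_c \geq 0$, so $L_c + I$ is strictly positive and symmetric on $C_c(\mathbb{N}_0)$, and $L_c$ is essentially self-adjoint if and only if $L_c + I$ is; Theorem~\ref{Simon Reed} then recasts this as $\text{Ker}(L_c^* + I) = \{0\}$, i.e., the nonexistence of a nontrivial $u \in \ell^2(\mathbb{N}_0, m)$ with $\Delta u = -u$.

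Next I would solve this recurrence explicitly. The equation at $x = 0$ involves only $u(0)$ and $u(1)$ (since $b(0,-1) = 0$ on a birth-death chain), and together with the three-term recurrence at $x \geq 1$ this forces the solution space on $\mathbb{N}_0$ to be one-dimensional; normalize $u(0) = 1$. Setting $\phi(x) := b(x,x+1)(u(x+1) - u(x))$ with $\phi(-1) := 0$, the equation $\Delta u = -u$ becomes the first-order system
\[
\phi(x) - \phi(x-1) = m(x) u(x), \qquad u(x+1) - u(x) = \frac{\phi(x)}{b(x,x+1)},
\]
which telescopes to $\phi(x) = \sum_{y=0}^{x} m(y) u(y)$ and shows that $u$ is positive and strictly increasing with $u \geq 1$.

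For the direction $(2) \Rightarrow (1)$, the bound $\phi(x) \geq m(0)$ yields $u(x+1) - u(x) \geq m(0)/b(x,x+1)$, and telescoping gives $u(N) \geq 1 + m(0) B(N-1)$ for $N \geq 1$, where $B(k) := \sum_{r=0}^{k} 1/b(r,r+1)$. Therefore
\[
\sum_{N=1}^{\infty} u(N)^2 m(N) \geq m(0)^2 \sum_{k=0}^{\infty} B(k)^2 m(k+1),
\]
so divergence of the right-hand side forces $u \notin \ell^2(\mathbb{N}_0, m)$, hence $\text{Ker}(L_c^* + I) = \{0\}$ and $L_c$ is essentially self-adjoint.

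The converse $(1) \Rightarrow (2)$ is the main obstacle: I must produce a nontrivial $\ell^2$ solution when the series converges. The key observation is that convergence of $\sum_k B(k)^2 m(k+1)$ says precisely that the function $h(x) := B(x-1)$ (with $B(-1) = 0$) lies in $\ell^2(\mathbb{N}_0, m)$; direct computation shows $\Delta h \equiv 0$ on $\mathbb{N}$ and $\Delta h(0) = -1/m(0)$, and the constant $1$ also lies in $\ell^2$ here because $B(k) \geq B(0) > 0$ forces $\sum_{x} m(x) < \infty$. Thus at $\lambda = 0$ both linearly independent formal harmonic functions are $\ell^2$-integrable near infinity. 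I would then invoke Weyl's limit-point/limit-circle dichotomy for tridiagonal operators to conclude that the recurrence is in the limit-circle case, which by the Weyl alternative persists at every $\lambda$; in particular at $\lambda = -1$ all solutions of $\Delta v = -v$ lie in $\ell^2$, so the $u$ constructed above does too, yielding a nonzero element of $\text{Ker}(L_c^* + I)$ and destroying essential self-adjointness. An alternative to invoking Weyl would be to construct such a $v$ directly, iterating $u_0 \equiv 1$ and $\Delta u_{n+1} = -\epsilon u_n$ with $u_n(0) = 0$ for $n \geq 1$, bounding $|u_n(x)| \leq C^n(1 + B(x-1))$, and summing in $\ell^2$ for sufficiently small $\epsilon > 0$.
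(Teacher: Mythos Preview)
The paper does not prove Theorem~\ref{Hamburger's Criterion}; it is quoted as a classical result of Hamburger with a modern proof deferred to \cite{IKMW}, so there is no in-paper argument to compare against. That said, your proposal is sound and close in spirit to the approach hinted at in the paper (the discrete Wronskian/flux $\phi(x)=b(x,x+1)(u(x+1)-u(x))$ is exactly the device behind Lemma~\ref{lemma: full history recurrence} and Proposition~2.11).

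Your direction $(2)\Rightarrow(1)$ is complete and correct: the telescoped lower bound $u(N)\ge 1+m(0)B(N-1)$ forces $u\notin\ell^2$ when the series diverges, and the one-dimensionality of the solution space then kills the kernel.

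For $(1)\Rightarrow(2)$ both of your options work but neither is self-contained as written. Invoking the Weyl limit-point/limit-circle alternative is legitimate --- the invariance of the classification under change of spectral parameter is a genuine theorem for Jacobi matrices --- but it is an external result of roughly the same depth as what you are proving. The Neumann-series route is more elementary and can be made fully rigorous: with $M=\sum_x m(x)$ and $S=\sum_k B(k)^2 m(k+1)$ both finite, the iteration $u_{n+1}(N)=\epsilon\sum_{x=0}^{N-1}\frac{1}{b(x,x+1)}\sum_{y=0}^{x}m(y)u_n(y)$ together with Cauchy--Schwarz gives $|u_{n+1}(N)|\le \epsilon C_n(M+\sqrt{MS})\,B(N-1)$, so the bound $|u_n|\le C^n(1+B(\cdot-1))$ propagates with $C=\epsilon(M+\sqrt{MS})$, and for small $\epsilon>0$ the series converges in $\ell^2$ to a nontrivial solution of $(\Delta+\epsilon)v=0$. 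Since essential self-adjointness fails as soon as $\mathrm{Ker}(L_c^*-\lambda)\neq\{0\}$ for \emph{some} $\lambda<0$, this suffices.

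One point to make explicit: when you say ``both linearly independent formal harmonic functions are $\ell^2$ near infinity,'' you are tacitly switching to the two-dimensional solution space of the three-term recurrence on $\{1,2,\ldots\}$, dropping the boundary equation at $0$ (which, as you correctly observed earlier, cuts the space down to one dimension on all of $\mathbb{N}_0$). This is exactly the right framing for Weyl theory, but it should be stated to avoid an apparent contradiction with your own dimension count.
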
 
We note that essential self-adjointness can also be characterized by a notion of capacity $\text{Cap}_{2,2}(\infty)$ which requires the construction of second order Sobolev spaces on graphs (see \cite{IKMW} for details).
    \subsection{Schr\"{o}dinger Operators on Birth-Death Chains}  \label{Section: Schrodinger Theory}
   
    \begin{definition}
        [Formal Schr\"{o}dinger Operators on Birth-Death Chains] Let $(X,b,m)$ be a weighted-graph. We denote $\Delta + W$ as the \textbf{formal Schr\"{o}dinger operator} where $\Delta$ is the formal Laplacian as defined in Definition 2.7 and $W \in C(X)$ is the\textbf{ potential function} acting on functions via $Wv(x) = W(x) v(x)$.
    \end{definition}

 As in the case of the Laplacian on birth-death chains, we define the \textbf{Schr\"{o}dinger operator} $L_c+ W$ as the formal Schr\"{o}dinger operator acting on $C_c(\mathbb{N}_0)$: \[ L_c +W : C_c(\mathbb{N}_0) \to \ell^2(\mathbb{N}_0,m). \] First we discuss known results for essential self-adjointness of the Schr\"{o}dinger operators and its connection to the Laplacian \cite{IKMW}. The first result states that adding a lower bounded potential does not impact essential self-adjointness of the Laplacian: 
    \begin{proposition}
       Let $(\mathbb{N}_0,b,m)$ be a birth-death chain. Let $W \in C(\mathbb{N}_0)$ with $W(x) \geq K$ for all $x \in \mathbb{N}_0$ for some $K \in \mathbb{R}$. If $L_c$ is essentially self-adjoint, then $L_c+ W$ is essentially self-adjoint.
    \end{proposition}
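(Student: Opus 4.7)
The plan is to invoke Theorem \ref{Simon Reed}. First, by replacing $W$ with $W - (K - 1)$, I may assume $W \geq 1$; this changes $L_c + W$ only by an additive multiple of the identity and so preserves essential self-adjointness. Combining Green's formula $\langle L_c f, f \rangle = Q(f, f) \geq 0$ for $f \in C_c(\mathbb{N}_0)$ with $\langle W f, f \rangle \geq \|f\|^2$ yields $\langle (L_c + W) f, f \rangle \geq \|f\|^2$, so $L_c + W$ is symmetric and strictly positive. Theorem \ref{Simon Reed} then reduces the problem to proving $\text{Ker}((L_c + W)^*) = \{0\}$. A standard computation, analogous to the one in Section \ref{Section: Laplacian ESA}, identifies $\text{Dom}((L_c + W)^*) = \{f \in \ell^2(\mathbb{N}_0, m) : (\Delta + W) f \in \ell^2(\mathbb{N}_0, m)\}$ with $(L_c + W)^* f = (\Delta + W) f$, so the task becomes showing that every $f \in \ell^2(\mathbb{N}_0, m)$ satisfying $(\Delta + W) f = 0$ pointwise vanishes.

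Suppose such an $f$ exists and is nonzero. I would pair the pointwise equation $\Delta f = -W f$ against the sharp truncation $f_n = f \cdot 1_{\{0, \dots, n\}} \in C_c(\mathbb{N}_0)$. A direct summation-by-parts (equivalent to Green's formula with one argument in $C_c(\mathbb{N}_0)$) yields the boundary identity
\[
\sum_{r=0}^{n-1} b(r, r+1)\bigl(f(r+1) - f(r)\bigr)^2 + \sum_{x=0}^{n} W(x) f(x)^2 m(x) = b(n, n+1) f(n)\bigl(f(n+1) - f(n)\bigr).
\]
The left-hand side is nondecreasing in $n$ with limit at least $\|f\|^2$ (using $W \geq 1$), so the right-hand side eventually exceeds $\|f\|^2$. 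A short manipulation converts this into the additive growth estimate $f(n+1)^2 - f(n)^2 \geq 2\|f\|^2/b(n, n+1)$ for large $n$, and telescoping gives $f(n)^2 \geq 2\|f\|^2 \, T_{n-1} + O(1)$, where $T_k = \sum_{r=0}^k 1/b(r, r+1)$.

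The main obstacle is converting this growth into a contradiction with $f \in \ell^2(\mathbb{N}_0, m)$, and it is here that the essential self-adjointness of $L_c$ enters, via Hamburger's criterion (Theorem \ref{Hamburger's Criterion}) which asserts $\sum_k T_k^2 \, m(k+1) = \infty$. If the sequence $(T_k)$ is bounded then this divergence forces $\sum_k m(k+1) = \infty$, and the additive estimate alone already produces $\sum_n f(n)^2 m(n) = \infty$. If instead $T_k \to \infty$, I would bootstrap: applying Cauchy-Schwarz to the energy term as $\sum_{r=0}^{n-1} b(r, r+1)(f(r+1) - f(r))^2 \geq (f(n) - f(0))^2/T_{n-1}$ sharpens the boundary identity to $f(n+1)/f(n) \geq 1 + 1/(b(n, n+1) T_{n-1})$ for large $n$, and iterating yields $f(n)^2 \geq C \cdot T_{n-1}^2$, whence $\sum_n f(n)^2 m(n) \geq C' \sum_k T_k^2 \, m(k+1) = \infty$. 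In either case $f \notin \ell^2(\mathbb{N}_0, m)$, a contradiction, so $\text{Ker}((L_c + W)^*) = \{0\}$ and Theorem \ref{Simon Reed} concludes that $L_c + W$ is essentially self-adjoint.
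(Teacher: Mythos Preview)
The paper does not prove this proposition; it is quoted from \cite{IKMW} as background. Your argument is therefore an independent proof, and the overall strategy---shifting to make $L_c+W$ strictly positive, invoking Theorem~\ref{Simon Reed}, deriving the summation-by-parts identity, and then forcing $f\notin\ell^2(\mathbb{N}_0,m)$ via Hamburger's criterion (Theorem~\ref{Hamburger's Criterion})---is sound and pleasantly direct.

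One step is more delicate than your write-up suggests. In the case $T_k\to\infty$, the Cauchy--Schwarz bound on the energy term actually gives
\[
\frac{f(n+1)}{f(n)} \;\ge\; 1 + \frac{\bigl(1 - f(0)/f(n)\bigr)^2}{b(n,n+1)\,T_{n-1}},
\]
not the clean $1 + 1/(b(n,n+1)T_{n-1})$ you wrote. Iterating a ratio inequality of the form $f(n+1)/f(n)\ge 1 + (1-\varepsilon)\,\delta_n/T_{n-1}$ (with $\delta_n = T_n - T_{n-1}=1/b(n,n+1)$) only yields $f(n)\gtrsim T_{n-1}^{\,1-\varepsilon}$ via the concavity bound $1+(1-\varepsilon)x\ge(1+x)^{1-\varepsilon}$, and since Hamburger's criterion involves $T_k^2$, any fixed loss of exponent would leave a genuine gap. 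The repair is to feed your additive estimate $f(n)\gtrsim\sqrt{T_{n-1}}$ back in: the deficit satisfies $1-(1-f(0)/f(n))^2 \le 2f(0)/f(n)\lesssim T_{n-1}^{-1/2}$, and one checks that $\sum_n T_{n-1}^{-1/2}\,\delta_n/T_n<\infty$ (compare with $\int T^{-3/2}\,dT$), so the telescoping product $\prod_n\bigl(1 - O(T_{n-1}^{-1/2})\,\delta_n/T_n\bigr)$ stays bounded away from zero and indeed $f(n)\ge C\,T_{n-1}$. With this refinement your argument goes through.
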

    Thus, for lower bounded potentials, Hamburger's Criterion (Theorem \ref{Hamburger's Criterion}) can be utilized to give a criterion for essential self-adjointness. We note that the above proposition is not an equivalence statement, the next proposition from \cite{IKMW} gives an equivalence statement showing that essential self-adjointness of Schr\"{o}dinger operators can be evaluated with a positive solution: 
     \begin{proposition}
          Let $(\mathbb{N}_0,b,m)$ be a birth-death chain and $W \in C(\mathbb{N}_0)$ such that $L_c+ W$ is semi-bounded operator with lower bound $K \in \mathbb{R}$. Let $v \in C(\mathbb{N}_0)$ with $v>0$ satisfy $(\Delta + W) v  = \lambda v$ for $\lambda < K$. Then $L_c + W$ is essentially self-adjoint if and only if \begin{equation*}
            \sum_{r=0}^{\infty} \left( \sum_{k=0}^r \dfrac{1}{b(k,k+1) v(k)v(k+1)} \right)^2 v^2(r+1) m(r+1) = \infty.
        \end{equation*}
     \end{proposition}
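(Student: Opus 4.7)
The plan is to reduce the proposition to Hamburger's Criterion (Theorem \ref{Hamburger's Criterion}) via a ground state transform, using the positive solution $v$ to turn $L_c + W - \lambda$ into a pure Laplacian on a reweighted birth-death chain.

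First I would define the auxiliary birth-death chain $(\mathbb{N}_0, \tilde b, \tilde m)$ by
\[
\tilde b(x,y) = b(x,y) v(x) v(y), \qquad \tilde m(x) = v(x)^2 m(x),
\]
which is well defined and positive since $v > 0$ and $b(x,y) > 0$ iff $|x-y|=1$. Let $\tilde L_c$ denote the restriction of the associated formal Laplacian $\tilde \Delta$ to $C_c(\mathbb{N}_0)$, and let $U : \ell^2(\mathbb{N}_0, \tilde m) \to \ell^2(\mathbb{N}_0, m)$ be the multiplication operator $Uf = vf$. A direct computation verifies $\|Uf\|_{m} = \|f\|_{\tilde m}$ and $UC_c(\mathbb{N}_0) = C_c(\mathbb{N}_0)$, so $U$ is unitary and preserves the core.

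The key algebraic step, which I expect to be the main calculation, is the ground state identity
\[
(\Delta + W - \lambda)(vf)(x) = \frac{1}{m(x)} \sum_{y} b(x,y) v(y) (f(x) - f(y)) = v(x) \tilde \Delta f(x),
\]
for every $f \in C_c(\mathbb{N}_0)$. The first equality follows by splitting $v(x)f(x) - v(y)f(y) = f(x)(v(x)-v(y)) + v(y)(f(x)-f(y))$ inside the sum defining $\Delta(vf)$ and applying $(\Delta + W)v = \lambda v$ to collapse the first piece against the potential term. The second equality is simply rewriting using $\tilde b$ and $\tilde m$. This identity says precisely that $U \tilde L_c = (L_c + W - \lambda) U$ on $C_c(\mathbb{N}_0)$, so $L_c + W - \lambda$ and $\tilde L_c$ are unitarily equivalent as symmetric operators on dense domains.

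Finally, essential self-adjointness is invariant under unitary equivalence and under shifts by a real constant, so $L_c + W$ is essentially self-adjoint if and only if $\tilde L_c$ is. Applying Hamburger's Criterion (Theorem \ref{Hamburger's Criterion}) to $(\mathbb{N}_0, \tilde b, \tilde m)$ yields the equivalent condition
\[
\sum_{r=0}^{\infty} \left( \sum_{k=0}^{r} \frac{1}{\tilde b(k,k+1)} \right)^{\!2} \tilde m(r+1) = \infty,
\]
which, after substituting $\tilde b(k,k+1) = b(k,k+1)v(k)v(k+1)$ and $\tilde m(r+1) = v^2(r+1) m(r+1)$, is exactly the claimed divergence condition. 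The hypothesis $\lambda < K$ guarantees $L_c + W - \lambda$ is strictly positive, which is the setting in which Theorem \ref{Simon Reed} (and hence the characterization underlying Hamburger's Criterion) applies, so nothing is lost in the reduction.
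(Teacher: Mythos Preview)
Your argument via the ground state (Doob) transform is correct: the identity $(\Delta + W - \lambda)(vf) = v\,\tilde\Delta f$ is verified exactly as you sketch, the map $U$ is a unitary bijection of $C_c(\mathbb{N}_0)$ onto itself, and the conclusion then follows from Hamburger's Criterion applied to $(\mathbb{N}_0,\tilde b,\tilde m)$.

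Note, however, that the paper does not supply its own proof of this proposition; it is quoted as a known result from \cite{IKMW}. So there is no in-paper argument to compare against. That said, your ground state transform is precisely the standard mechanism behind such results (and is the approach taken in \cite{IKMW}), so your proposal is in line with the intended proof rather than a genuinely different route.
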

    Note that while the above proposition gives an equivalence, it does not address what a function $v > 0$ satisfying $(\Delta + W) v = \lambda v$ looks like. We address this question in the next section. \smallskip
    
     Motivated by the discussion in Section \ref{Subsection: SelfAdjointness Theory} and Theorem \ref{Simon Reed}, throughout the note we will assume that $\Delta + W$ yields a semi-bounded operator with lower bound $K \in \mathbb{R}$. Explicitly this means that there exists a $K \in \mathbb{R}$ such that  \begin{equation*}\label{Strict Positive Condition}
 \braket{ (\Delta + W ) \varphi  , \varphi }   \geq K \norm{\varphi}^2
\end{equation*}
for all $\varphi \in C_c(X)$. From our discussion in Section \ref{Section: Laplacian ESA} and Theorem \ref{Simon Reed}, we have the following equivalent definition for essential self-adjointness of semi-bounded Schr\"{o}dinger operators on birth-death chains: \smallskip 

     If $L_c + W$ is a semi-bounded Schr\"{o}dinger operator with lower bound $K \in \mathbb{R}$ on a birth-death chain, then $L_c + W$ is essentially self-adjoint if and only if the only functions $ v \in \ell^2(\mathbb{N}_0,m)$ satisfying $(\Delta + W - \lambda) v =0$ is the trivial function $v \equiv 0$ for some $\lambda < K$.

\section{Characterization of Essential Self-Adjointness from Solving Linear Recurrence}  \label{Section: Characterization of ESA from Solving Linear Recurrence}
As Section \ref{Section: Schrodinger Theory} suggests, the key to characterizing essential self-adjointness of Schr\"{o}dinger operators lies in deriving explicit formulas for $v \in C(\mathbb{N}_0)$ satisfying $(\Delta + W - \lambda) v(r) =0$ and studying their norm in $\ell^2(\mathbb{N}_0,m)$. While closed form solutions do exist,  they yield convoluted formulas. Nonetheless, these formulas allow for a full characterization of essential self-adjointness for semi-bounded Schr\"{o}dinger operators which we offer in this section. \smallskip 

In \cite{Mallik_1998}, a complete solution is derived in terms of the variable coefficients, which we now present:
\begin{theorem}  \label{Alt Long-History} Let $(\mathbb{N}_0,b,m)$ be a birth-death chain and $W \in C(\mathbb{N}_0)$ be a potential function such that $L_c + W$ is semi-bounded with lower bound $K$ and let $\lambda <K$. If $v: \mathbb{N}_0 \to \mathbb{R}$ such that $(\Delta  + W - \lambda) v(k) = 0$ for all $k \in \mathbb{N}_0$, then  \[ v(k+1) = (\alpha_k + \beta_k ) v(0) \] for all $k \geq 0$ where $\alpha_{k}$ and $\beta_k$ are given by
     \begin{align*}
        \alpha_{k} &=  \Theta \sum_{r=1}^k \sum_{\substack{ (\ell_1,\ell_2,..., \ell_r) \\ \ell_j \in \{1,2\} \\ \ell_1 + ... + \ell_r = k}} \left( \prod_{ \{ \ell_i |\ell_i =1\}} \left( 1 + \dfrac{b( c_{k,i}, c_{k,i} + 1 ) +   ( W(c_{k,i} + 1) - \lambda) m( c_{k,i} + 1 )}{b(c_{k,i} + 1, c_{k,i} + 2 )}\right)\right. \\ & \left.  \prod_{ \{ \ell_i |\ell_i =2\}} \left( \dfrac{-b( c_{k,i} + 1 ,  c_{k,i} +  2) }{b(c_{k,i} +  2,  c_{k,i} +  3)} \right)\right)   \\
         \beta_{k} &= \sum_{r=1}^{k+1} \sum_{\substack{ (\ell_1,\ell_2,..., \ell_r) \\ \ell_j \in \{1,2\} \\ \ell_1 + ... + \ell_r = k+1 \\ \ell_r =2}} \left( \prod_{ \{ \ell_i |\ell_i =1\}} \left( 1 + \dfrac{b( c_{k,i}, c_{k,i} + 1 ) +   ( W(c_{k,i} + 1) - \lambda) m( c_{k,i} + 1 )}{b(c_{k,i} + 1, c_{k,i} + 2 )}\right)\right. \\ & \left.  \prod_{ \{ \ell_i |\ell_i =2\}} \left( \dfrac{-b( c_{k,i} + 1 ,  c_{k,i} +  2) }{b(c_{k,i} +  2,  c_{k,i} +  3)} \right)\right) 
    \end{align*} 
   for $k \geq 1$ with $\Theta =  1+ \dfrac{(W(0) - \lambda)}{b(0,1)}m(0) $ and $c_{k,i} ( ( \ell_n ) )= c_{k,i} = k - \sum_{n=1}^i \ell_n $. For $k=0,$ we let $ \alpha_0 = \Theta $ and $ \beta_0 =0$.
    \end{theorem}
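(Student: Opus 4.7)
The plan is to recast $(\Delta + W - \lambda)v = 0$ as a scalar three-term linear recurrence in the values $v(k)$, extract a boundary condition at $k = 0$, and then verify the claimed closed form by strong induction on $k$.

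First, I would write $(\Delta + W - \lambda)v(k) = 0$ explicitly using the birth-death form of the Laplacian. The convention $b(0,-1) = 0$ collapses the $k = 0$ equation to the boundary relation $v(1) = \Theta v(0)$, which matches the theorem at $k = 0$. For $k \geq 1$ one reads off
\begin{equation*}
v(k+1) = P_k\, v(k) + Q_k\, v(k-1),
\end{equation*}
where $P_j = 1 + [b(j-1,j) + (W(j)-\lambda)m(j)]/b(j,j+1)$ and $Q_j = -b(j-1,j)/b(j,j+1)$. A direct comparison shows that the factor in the theorem corresponding to an $\ell_i = 1$ step is precisely $P_{c_{k,i}+1}$, while the factor for an $\ell_i = 2$ step is precisely $Q_{c_{k,i}+2}$, so the products in $\alpha_k$ and $\beta_k$ assemble sequences of $P$'s and $Q$'s along the path of decreasing index dictated by the composition $(\ell_1, \dots, \ell_r)$.

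Second, I would verify the base cases. The case $k = 0$ is the boundary relation itself. For $k = 1$, the only composition of $1$ in $\{1,2\}$ gives $\alpha_1 = \Theta P_1$; the only composition of $2$ ending in $2$ gives $\beta_1 = Q_1$; and $v(2) = P_1 v(1) + Q_1 v(0) = (\Theta P_1 + Q_1) v(0)$ matches.

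Third, for the inductive step I would apply the recurrence together with the inductive hypothesis to obtain
\begin{equation*}
v(k+1) = \bigl[P_k(\alpha_{k-1} + \beta_{k-1}) + Q_k(\alpha_{k-2} + \beta_{k-2})\bigr]\, v(0),
\end{equation*}
and identify this with $(\alpha_k + \beta_k)v(0)$ through the bijection between compositions of $k$ (respectively compositions of $k+1$ ending in $2$) and the disjoint union of compositions of $k-1$ and $k-2$ (respectively of $k$ and $k-1$ ending in $2$) obtained by recording the first part $\ell_1 \in \{1,2\}$. The $\ell_1 = 1$ compositions contribute the prefactor $P_k$ and assemble into $P_k(\alpha_{k-1} + \beta_{k-1})$; the $\ell_1 = 2$ compositions contribute $Q_k$ and assemble into $Q_k(\alpha_{k-2} + \beta_{k-2})$. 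The split between $\alpha$ and $\beta$ is preserved because the condition $\ell_r = 2$ depends only on the last entry; the $\Theta$ prefactor in $\alpha$ arises from paths that terminate via a $P$-factor applied to $v(1) = \Theta v(0)$, whereas $\beta$ paths terminate at $v(0)$ directly via a final $Q$-factor.

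The main obstacle is the index bookkeeping. When a composition of $k$ is decomposed as $\ell_1$ followed by a composition of $k - \ell_1$, the $c$-indices of the suffix satisfy $c_{k-\ell_1,\, i} = c_{k,\, i+1}$, and one must check that this shift reproduces the correct arguments of $b$, $W$, and $m$ inside every factor of the product. The verification is purely mechanical but is the most error-prone part of the argument, and carrying it out carefully is essentially the heart of Mallik's original derivation in \cite{Mallik_1998}.
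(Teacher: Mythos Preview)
Your approach is correct. The paper does not actually prove this theorem: it presents the formula as a known result imported from Mallik \cite{Mallik_1998}, with no accompanying argument. Your proposal supplies exactly the proof the paper omits---rewriting $(\Delta+W-\lambda)v=0$ as the three-term recurrence $v(k+1)=P_k v(k)+Q_k v(k-1)$ together with the boundary relation $v(1)=\Theta v(0)$, and then verifying the closed form by strong induction using the first-part decomposition of compositions into parts $\{1,2\}$---and this is precisely Mallik's method, as you yourself note at the end. The index bookkeeping you flag (that $c_{k-\ell_1,\,i}=c_{k,\,i+1}$ under the suffix bijection, so that the factors $P_{c+1}$ and $Q_{c+2}$ line up) is indeed the only delicate point, and your identification of the $\ell_i=1$ factor with $P_{c_{k,i}+1}$ and the $\ell_i=2$ factor with $Q_{c_{k,i}+2}$ is accurate.
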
 
    \begin{remark}
    Besides the difference of summing to $k$ and $k+1$ respectively for the coefficients in $\alpha_k$ and $\beta_k$, $\beta_k$ includes the additional restriction in its second sum where we require $\ell_r = 2.$
    \end{remark}
    With this formula, we can classify the essential self-adjointness of semi-bounded Schr\"{o}dinger operators by studying the norm of solutions in $\ell^2(\mathbb{N}_0,m)$.

      \begin{theorem}[Characterization of Essential Self-Adjointness of Semi-bounded Schr\"{o}dinger Operators on Birth-death Chains] \label{Theorem: Characterization of ESA }  Let $(\mathbb{N}_0,b,m)$ be a birth-death chain and $W \in C(\mathbb{N}_0)$ be a potential function such that $L_c + W$ is semi-bounded with lower bound $K$. Let $\alpha_{k}, \beta_{k}$ be defined as in Theorem \ref{Alt Long-History}. Then, the following statements are equivalent: 
    \begin{enumerate}
        \item $\displaystyle \sum_{k=0}^{\infty}   \left( \alpha_k + \beta_k \right)^2m(k+1) = \infty $ for some $\lambda < K$. 
        \item  $L_c + W$ is essentially self-adjoint.
    \end{enumerate} 
        \end{theorem}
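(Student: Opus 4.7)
The plan is to apply the equivalent formulation of essential self-adjointness stated at the end of Section \ref{Section: Schrodinger Theory}: $L_c + W$ is essentially self-adjoint if and only if, for some $\lambda < K$, the only function $v \in \ell^2(\mathbb{N}_0, m)$ satisfying $(\Delta + W - \lambda) v = 0$ is $v \equiv 0$. Combined with the explicit formula from Theorem \ref{Alt Long-History}, the result should reduce to computing the $\ell^2$-norm of an arbitrary solution.

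First I would fix $\lambda < K$ and observe that the space of solutions $v \in C(\mathbb{N}_0)$ of $(\Delta + W - \lambda) v = 0$ is one-dimensional. Although the equation is a second-order recurrence for $k \geq 1$, at the boundary $k = 0$ it degenerates (since there is no edge to $-1$) into a single relation between $v(0)$ and $v(1)$; hence the entire solution is determined by the initial value $v(0)$. By Theorem \ref{Alt Long-History}, this solution is explicitly given by $v(k+1) = (\alpha_k + \beta_k) v(0)$ for all $k \geq 0$.

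Next I would compute the $\ell^2(\mathbb{N}_0, m)$-norm directly:
\begin{equation*}
\|v\|^2 = \sum_{k=0}^{\infty} v(k)^2 m(k) = v(0)^2 m(0) + v(0)^2 \sum_{k=0}^{\infty} (\alpha_k + \beta_k)^2 m(k+1).
\end{equation*}
Thus a nontrivial solution (i.e.\ one with $v(0) \neq 0$) belongs to $\ell^2(\mathbb{N}_0, m)$ if and only if $\sum_{k=0}^{\infty} (\alpha_k + \beta_k)^2 m(k+1) < \infty$.

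From here the equivalence falls out. For (1) $\Rightarrow$ (2): if the series diverges for some $\lambda < K$, the only $\ell^2$-solution of $(\Delta + W - \lambda) v = 0$ is $v \equiv 0$, so $L_c + W$ is essentially self-adjoint by the characterization recalled above (which is Theorem \ref{Simon Reed} applied to the strictly positive operator $L_c + W - \lambda$). For (2) $\Rightarrow$ (1): suppose $L_c + W$ is essentially self-adjoint. Then there exists $\lambda < K$ such that no nonzero $\ell^2$-solution exists; taking $v(0) = 1$ in the explicit formula forces $\sum_{k=0}^{\infty} (\alpha_k + \beta_k)^2 m(k+1) = \infty$ for that $\lambda$, which is exactly statement (1).

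The genuine difficulty has already been absorbed into Theorem \ref{Alt Long-History}, whose combinatorial bookkeeping of $\alpha_k$ and $\beta_k$ does the heavy lifting; here the only things to verify carefully are the one-dimensionality of the solution space at the boundary $k = 0$ and the clean pass between the "for some $\lambda < K$" quantifier in the characterization and the "for some $\lambda < K$" quantifier in the statement of (1). Both are direct, so I expect no serious obstacle once the formula of Theorem \ref{Alt Long-History} is in hand.
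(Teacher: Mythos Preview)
Your proposal is correct and follows essentially the same route as the paper: both arguments invoke the explicit formula $v(k+1) = (\alpha_k + \beta_k)v(0)$ from Theorem \ref{Alt Long-History} to reduce the $\ell^2$-membership of a solution to the convergence of $\sum (\alpha_k+\beta_k)^2 m(k+1)$, and then read off the equivalence from the kernel criterion at the end of Section \ref{Section: Schrodinger Theory}. Your additional remarks on one-dimensionality of the solution space at the boundary $k=0$ make explicit what the paper leaves implicit, but the substance of the two proofs is the same.
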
 
    \begin{proof} \smallskip
      
          \qquad $(1) \implies (2)$: Let $v \in \ell^2(\mathbb{N}_0,m)$ be a solution to $(\Delta + W-\lambda)v(r) =0$. We will show that $v \equiv 0$. First we use the expression from Theorem \ref{Alt Long-History} to calculate its $\ell^2$ norm:  \begin{align*}
                \sum_{k=0}^{\infty} v(k+1)^2m(k+1) 
                &= v^2(0) \sum_{k=0}^{\infty} \left( \alpha_k + \beta_{k} \right)^2m(k+1) < \infty
            \end{align*} 
            However, by assumption \[ \sum_{k=0}^{\infty} \left( \alpha_k + \beta_{k} \right)^2m(k+1) = \infty. \] Hence we must have $v(0)=0$, and thus 
           Theorem \ref{Alt Long-History} gives $v \equiv 0$. 
             \smallskip 
            
            \qquad $(2) \implies (1)$: We proceed by contraposition, let \[\sum_{k=0}^{\infty} \left( \alpha_k + \beta_{k} \right)^2m(k+1) < \infty.\] Then by the argument above, any $v(0)\neq0$ induces a non-trivial solution $v \in \ell^2(\mathbb{N}_0,m)$ to $(\Delta + W-\lambda)v =0$. Hence, $L_c +W$ is not essentially self-adjoint.
    \end{proof}
    \begin{remark}
        In view of Hamburger's Theorem \ref{Hamburger's Criterion}, by setting $W \equiv 0$, we obtain the nontrivial fact that \[ \sum_{k=0}^{\infty} \left( \sum_{r=0}^k \dfrac{1}{b(r,r+1)} \right)^2 m(k+1) = \infty\] if and only if \[\sum_{k=0}^{\infty} \left(  \alpha_k + \beta_k \right)^2m(k+1) = \infty  .\]
    \end{remark}
    As we see from the above representations of Schr\"{o}dinger operator solutions are algebraicaly complex and impractical in evaluating the essentially self-adjointness. Thus, we are motivated to explore alternate statements to study this problem. In the following section, we derive a condition for the failure of essential self-adjointness that is much easier to check.
     \section{Failure of Essential Self-Adjointness of Schr\"{o}dinger Operators  } \label{Section: Additional Results}
We present an additional result that inspects the failure of essential self-adjointness of Schr\"{o}dinger operators that has a simpler statement than Theorem \ref{Theorem: Characterization of ESA }. 
\begin{theorem}
  Let $(\mathbb{N}_0,b,m)$ be a birth-death chain and $W \in C(\mathbb{N}_0)$ be a potential function such that $L_c + W$ is semi-bounded with lower bound $K.$ If 
     \begin{align*}
          \sum_{k=0}^{\infty} \left( \prod_{i=0}^{k} \left[  \left( 1+ \dfrac{b(i,i+1) +(W(i+1)-\lambda)m(i+1)}{b(i+1,i+2)}  \right)^2  \right. \right. \\ + \left. \left. \left( \dfrac{b(i,i+1)}{b(i+1,i+2)}  \right)^2  +1 \right]\right)  m(k+2) < \infty  \numberthis  \label{Alt ESA}
     \end{align*}
     for some (all) $\lambda <K$, then $L_c + W$ is not essentially self-adjoint. 
\end{theorem}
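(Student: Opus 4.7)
My plan is to sidestep the closed form of Theorem \ref{Alt Long-History} entirely and instead argue directly from the three-term recurrence satisfied by any pointwise solution $v$ of $(\Delta+W-\lambda)v=0$. Expanding $\Delta$ at a vertex $k\ge 1$ and solving for $v(k+1)$ gives
\[
v(k+1) = \left(1+\frac{b(k,k-1)+(W(k)-\lambda)m(k)}{b(k,k+1)}\right)v(k) - \frac{b(k,k-1)}{b(k,k+1)}v(k-1),
\]
which, after relabeling $i=k-1$ and using the symmetry $b(i+1,i)=b(i,i+1)$, becomes $v(i+2)=a_iv(i+1)-c_iv(i)$ with
\[
a_i = 1+\frac{b(i,i+1)+(W(i+1)-\lambda)m(i+1)}{b(i+1,i+2)}, \qquad c_i = \frac{b(i,i+1)}{b(i+1,i+2)}.
\]
These are exactly the quantities appearing inside the bracket of \eqref{Alt ESA}.

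Next I would package this scalar recurrence into a matrix one by setting $V_i=(v(i+1),v(i))^T$, so that $V_{i+1}=A_iV_i$. A single application of Cauchy--Schwarz, $(a_iv(i+1)-c_iv(i))^2 \le (a_i^2+c_i^2)(v(i+1)^2+v(i)^2)$, combined with the trivial estimate $v(i+1)^2\le \|V_i\|^2$, yields
\[
\|V_{i+1}\|^2 \le \left(a_i^2+c_i^2+1\right)\|V_i\|^2.
\]
Iterating and discarding the $v(k+1)^2$ summand gives the pointwise bound $v(k+2)^2\le\|V_0\|^2\prod_{i=0}^{k}(a_i^2+c_i^2+1)$, whose right-hand side is precisely the product inside \eqref{Alt ESA}. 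Multiplying by $m(k+2)$, summing in $k$, and invoking the hypothesis \eqref{Alt ESA}, I would then conclude $v\in\ell^2(\mathbb{N}_0,m)$ regardless of the initial data $v(0)$.

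To finish, fix any $\lambda<K$ and pick $v(0)\ne 0$; this produces a nonzero $\ell^2$ solution of $(\Delta+W)v=\lambda v$. Since $\lambda v\in\ell^2$, one has $v\in\text{Dom}((L_c+W)^*)$, hence $v\in\text{Ker}((L_c+W)^*-\lambda)$. Applying Theorem \ref{Simon Reed} to the strictly positive symmetric operator $L_c+W-\lambda$ then yields the failure of essential self-adjointness for $L_c+W$.

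The main obstacle, in my view, is choosing the \emph{right} Cauchy--Schwarz estimate at the matrix-norm step. The naive bound $(a_iu-c_iv)^2\le 2a_i^2u^2+2c_i^2v^2$ would introduce an extraneous factor of $2^k$ into the product and destroy the match with \eqref{Alt ESA}; it is the symmetric form $(a_iu-c_iv)^2\le(a_i^2+c_i^2)(u^2+v^2)$, together with the additive $+1$ generated by carrying the $v(i+1)^2$ entry, that produces precisely the factor $a_i^2+c_i^2+1$ demanded by the hypothesis.
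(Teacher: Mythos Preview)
Your proof is correct and follows the same overall strategy as the paper: rewrite the three-term recurrence as a $2\times 2$ matrix recurrence $V_{i+1}=A_iV_i$, bound $\|V_{i+1}\|_2^2\le(a_i^2+c_i^2+1)\|V_i\|_2^2$, iterate, and conclude that every solution with $v(0)\neq 0$ lies in $\ell^2(\mathbb{N}_0,m)$, which by the discussion at the end of Section~\ref{Section: Schrodinger Theory} forces $L_c+W$ to fail essential self-adjointness.

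The one genuine difference is in how the per-step bound is obtained. The paper invokes the operator-norm identity $\|A_i\|_{\mathrm{II}}=\sigma_{\max}(A_i)=\sqrt{\lambda_{\max}(A_i^*A_i)}$ and then estimates $\lambda_{\max}(A_i^*A_i)\le\operatorname{Trace}(A_i^*A_i)=a_i^2+c_i^2+1$. You reach the identical constant by a single application of Cauchy--Schwarz, $(a_iu-c_iv)^2\le(a_i^2+c_i^2)(u^2+v^2)$, together with the trivial carry-over $v(i+1)^2\le\|V_i\|_2^2$. Both arguments are really the Frobenius-norm inequality $\|A_ix\|_2\le\|A_i\|_F\|x\|_2$ in disguise; your version simply avoids the detour through singular values and the linear-algebra citation, which makes the argument slightly more self-contained.
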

\begin{proof} Let (\ref{Alt ESA}) hold true, we will show that any $v(r)$ such that $v(0)\neq 0$ and $(\Delta + W - \lambda) v =0$ induces a non-trivial function in $\ell^2(\mathbb{N}_0,m)$. \smallskip 

   Denoting $\alpha_i = \dfrac{(W(i+1)-\lambda)m(i+1)}{b(i+1,i+2)},  \text{ } \beta_i = \dfrac{b(i,i+1)}{b(i+1,i+2)}$. We define a two by two matrix $A_i$ and vector $\Vec{y_i}$ by
    \begin{align*}
        A_i &\coloneqq  \begin{bmatrix}
            1+ \alpha_i + \beta_i &  -\beta_i \\ 1 &0
        \end{bmatrix} \\ 
        \Vec{y_i} & \coloneqq \begin{bmatrix}
            v(i) \\ v(i-1)
        \end{bmatrix},
    \end{align*}
We find that the recurrence arising from evaluating the formula $(\Delta + W - \lambda) v(r) = 0 $ at each point is equivalent to the matrix recurrence \begin{align*}
        \Vec{y_2} &= A_0 \Vec{y_1} \\ 
        \Vec{y_3} &= A_1 \Vec{y_2} = A_1 A_0 \Vec{y_1} \\ 
        & \vdots \\ \Vec{y_k} &= \prod_{i=0}^{k-2} A_i\Vec{y_1}. \numberthis \label{2 norm equation}
    \end{align*}
    Here the product operator $\prod$ denotes left multiplication of matrices. In the following calculation we denote by $\norm{\cdot}_2$ the 2-norm on $\mathbb{R}^2$, i.e., \[ \norm{x}_2 = \sqrt{x_1^2 + x_2^2}, \] and  $\norm{\cdot}_{\RN{2}}$ denotes the operator norm on the space of matrices, that is, \[ \norm{A}_{\RN{2}} = \sup_{ \norm{x}_2 \neq 0} \dfrac{\norm{Ax}_2}{\norm{x}_2}. \] 
    By evaluating the 2-norm on both sides of (\ref{2 norm equation}) we obtain
    \begin{align*}
        \norm{\vec{y}_k}_2 &=  \norm{ \prod_{i=0}^{k-2} A_i \Vec{y_i}}_2 \\
        &\leq \norm{\prod_{i=0}^{k-2} A_i}_{\RN{2}} \sqrt{v(1)^2+v(0)^2} \\ & \leq \prod_{i=0}^{k-2} \norm{A_i}_{\RN{2}} \sqrt{v(1)^2 + v(0)^2}.  \numberthis \label{Sing Vales}
    \end{align*} By general theory in linear algebra, the operator norm of any matrix $B$ is equal to its largest singular value, that is, \[ \norm{B}_{\RN{2}} = \sigma_{\text{max}}(B)  = \sqrt{\lambda_{\text{max}} (B^* B)},\] where $\lambda_{\text{max}} (B^{*} B)$ is the largest eigenvalue of $B^{*}B$ (see \cite{YangLinAlg} for details). Thus we apply this estimate to (\ref{Sing Vales}) to get 
    \begin{align*}
        \norm{\vec{y}_k}_2  
        &\leq \prod_{i=0}^{k-2} \sqrt{\lambda_{\text{max}}(A_i^{*} A_i) }\sqrt{v(1)^2+v(0)^2}.
    \end{align*}
 We calculate that \begin{align*}
        A_i^{*} A_i &= \begin{bmatrix}
            (1+ \beta_i + \alpha_i)^2+1 & - (\beta_i+\beta_i^2 + \beta_i\alpha_i) \\- (\beta_i+\beta_i^2 + \beta_i\alpha_i) & \beta_i^2
        \end{bmatrix}. 
    \end{align*}We note that $A_i^{*} A_i$ is symmetric, hence, it only contains real eigenvalues. Since the trace is the sum of eigenvalues, we have that \[ \lambda_{\text{max}} (A_i^{*} A_i) \leq \text{Trace}(A_i^{*} A_i) = (1+\beta_i + \alpha_i)^2 + \beta_i^2 + 1. \]  Hence 
    \begin{align*}
        |v(k)| \leq  \sqrt{v(k)^2 + v(k-1)^2} \leq \prod_{i=0}^{k-2} \sqrt{ (1+ \beta_i + \alpha_i)^2+ \beta_i^2 + 1} \sqrt{v(1)^2 + v(0)^2}.
    \end{align*}
    Taking the square, multiplying by the measure and taking the sum to get 
    \begin{align*}
        \sum_{k=2}^{\infty} \left( \prod_{i=0}^{k-2} \left[  \left( 1+ \dfrac{b(i,i+1) + (W(i+1)-\lambda)m(i+1)}{b(i+1,i+2)} \right)^2 \right. \right.  +  \\ 
     \left. \left. \left( \dfrac{b(i,i+1)}{b(i+1,i+2)} \right)^2  + 1\right]\right)  m(k) < \infty.
    \end{align*}  
    By a shift of index we have proven the theorem.
\end{proof}

    \section{Closed Form Representations of Solutions for the Schr\"{o}dinger Operator} \label{Section: Closed Form Solutions}
   In this last section, we explore other methods to study closed form solutions for two specific cases of the potential functions. We will show that the second order recurrence that arises from studying the Schr\"{o}dinger operator problem is equivalent to a full-history recurrence, i.e., a recurrence relation that takes the form $x_k = F(x_1,...,x_{k-1})$ for all $k \in \mathbb{N}$. It can be shown that, for special cases, this full-history recurrence representation of solution yields closed form generating functions and series representations (for more on generating functions, see \cite{Wilf}). Note that, as the contents of this section are independent of other sections of this note, we will omit the $W(r) -\lambda$ term from the previous section and replace it with $W(r)$ for the sake of simplicity.

Full-history recurrences have been extensively studied. We especially note that in \cite{Mallik_1998}, there exists a closed form representation for inhomogeneous full-history recurrences in terms of the driving force terms, however, these solutions evaluate to the trivial sequence when specialized to the homogeneous case. In the current literature, there are no expressions in terms of homogeneous full-history recurrences, however, we will show that by considering special cases of Schr\"{o}dinger operators, more digestible closed form solutions can be derived. 

          \begin{lemma} \label{lemma: full history recurrence}  Let $(\mathbb{N}_0,b,m)$ be a birth-death chain. Then $v: \mathbb{N}_0 \to \mathbb{R}$ such that $(\Delta  + W) v(r) = 0$ is a solution to the linear full-history recurrence  \begin{align}
     v(r+1) = v(1) + C \sum_{k=1}^r \dfrac{1}{b(k,k+1)} +   \sum_{k=1}^r \left( \dfrac{1}{b(k,k+1)} \left( \sum_{n=1}^k v(n) W(n)m(n) \right)\right) \label{1.5}
\end{align} for all $r \geq 0$, where $C = b(1,0)(v(1) -v(0))$. \end{lemma}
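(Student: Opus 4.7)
The plan is to recognize the Schrödinger equation $(\Delta + W)v(r) = 0$ as a second-order linear recurrence and integrate it twice: once to produce a first-order equation for a discrete derivative, and again to reach the full-history form. The natural discrete derivative to use on a birth-death chain is the weighted difference $D(r) := b(r,r+1)(v(r+1) - v(r))$, since the Laplacian is built out of precisely these quantities.

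First I would unfold the definition of $\Delta$ at a point $r \geq 1$ and multiply through by $m(r)$. Using the edge-weight symmetry $b(r,r-1) = b(r-1,r)$, the equation $(\Delta + W)v(r) = 0$ rearranges to
\begin{equation*}
    b(r,r+1)(v(r+1) - v(r)) = b(r-1,r)(v(r) - v(r-1)) + W(r) v(r) m(r),
\end{equation*}
which in the $D$ notation is simply $D(r) = D(r-1) + W(r) v(r) m(r)$. This is a first-order inhomogeneous recurrence, and telescoping from $1$ to $r$ yields
\begin{equation*}
    D(r) = D(0) + \sum_{n=1}^r W(n) v(n) m(n).
\end{equation*}
Here $D(0) = b(0,1)(v(1) - v(0)) = C$ by symmetry of $b$, giving $D(r) = C + \sum_{n=1}^r W(n) v(n) m(n)$.

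Next I would recover $v$ itself by dividing by $b(r,r+1)$ to obtain $v(r+1) - v(r) = D(r)/b(r,r+1)$, and then telescope once more, summing $k$ from $1$ to $r$. Substituting the explicit formula for $D(k)$ produces
\begin{equation*}
    v(r+1) - v(1) = C \sum_{k=1}^r \frac{1}{b(k,k+1)} + \sum_{k=1}^r \frac{1}{b(k,k+1)}\left( \sum_{n=1}^k v(n) W(n) m(n) \right),
\end{equation*}
which is exactly the stated full-history recurrence. The $r=0$ case is the trivial identity $v(1) = v(1)$ after interpreting the empty sums as zero, so no separate base case is needed.

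I do not expect any genuine obstacle: the argument is a two-step discrete integration, and the only points that require care are bookkeeping the index ranges (ensuring the recurrence $D(r) = D(r-1) + W(r)v(r)m(r)$ is legitimate only for $r \geq 1$, so that the constant of integration $C$ absorbs the information at $r=0$) and invoking the symmetry of $b$ to identify $b(r,r-1)$ with $b(r-1,r)$ so that the telescoping is clean. Neither of these introduces any analytic content beyond rearrangement of the defining equation.
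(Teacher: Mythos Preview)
Your proposal is correct and is essentially the approach the paper has in mind: the paper's proof is the single line ``The proof follows from a strong induction argument,'' and your two-step telescoping is precisely what that induction would unpack to. If anything, your presentation is cleaner, since introducing $D(r)=b(r,r+1)(v(r+1)-v(r))$ makes explicit why the first summation absorbs the constant $C=D(0)$ and why the equation at $r=0$ is not needed.
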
  
\begin{remark}
    We note that in the case $W \equiv 0$, the recurrence relation takes the form of closed form solutions for functions which are harmonic except at $r=0$. In \cite{IKMW}, this fact is used in proving Hamburger's Criterion (Theorem \ref{Hamburger's Criterion}).
\end{remark}  
\begin{proof}
    The proof follows from a strong induction argument \end{proof} 
    Next, we show the full history recurrences in Lemma \ref{lemma: full history recurrence} can be expressed as another full history recurrence in terms of the generating function $A(z) = \sum_{r=1}^{\infty} v_r z^r$ where the $r$ index for $v$ is now placed in the lower index to emphasize the treatment of $v$ as a sequence over $\mathbb{N}_0$. That is, $v_r = v(r)$ with similar expression for the potential function $W$ and measure $m$.
    
\begin{theorem} \label{Theorem generating function}
    Let $A(z)$ denote the generating function $A(z) = \sum_{r=1}^{\infty} v_r z^r$ where $v_r$ satisfies the recurrence relation (\ref{1.5}). If $v_r \neq 0$ for all $r \geq 0$, then $A(z)$ is given by
      \[ A(z) \left( 1- \sum_{n=1}^{\infty} z^n  \left( \sum_{r=1}^{\infty} \beta_{r,n} z^r \right) \right) = C \sum_{r=1}^{\infty} \left(  \sum_{k=1}^{r-1} \dfrac{1}{b(k,k+1)} \right) z^r +   \dfrac{v_1z}{1-z}. \] 
    where $\beta_{r,n}$ is defined by the relation\begin{equation}
        W_rm_rv_r \sum_{\ell =r}^{r+n-1} \dfrac{1}{b(\ell, \ell +1)}   = \sum_{\gamma=1}^r \beta_{r-\gamma, n} v_{\gamma}.  \label{Relation}
    \end{equation}
\end{theorem}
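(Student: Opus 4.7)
The plan is to convert the full-history recurrence \eqref{1.5} into a single functional equation for $A(z)$ by the standard device of multiplying both sides by $z^{r+1}$ and summing over $r \geq 0$. Since $\sum_{r\geq 0}v_{r+1}z^{r+1} = A(z)$, the left-hand side collapses to $A(z)$. The constant term $v_1$ on the right of \eqref{1.5} contributes $v_1\sum_{r\geq 0}z^{r+1} = v_1 z/(1-z)$, and the $C$-term, after a shift of index, contributes $C\sum_{r=1}^{\infty}\bigl(\sum_{k=1}^{r-1}1/b(k,k+1)\bigr)z^r$. These give the two terms on the right-hand side of the claimed identity.

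The crux of the proof is the third, convolution-like term
\[
T := \sum_{r=0}^{\infty} z^{r+1}\sum_{k=1}^{r}\frac{1}{b(k,k+1)}\sum_{n=1}^{k} v_n W_n m_n,
\]
which I would first rewrite by swapping the inner two sums to obtain $\sum_{r\geq 1} z^{r+1}\sum_{n=1}^{r}v_n W_n m_n\sum_{\ell=n}^{r} 1/b(\ell,\ell+1)$. Introducing $N = r-n+1$, the inner $\ell$-sum becomes $\sum_{\ell=n}^{n+N-1}1/b(\ell,\ell+1)$, which is precisely the quantity on the left-hand side of the defining relation \eqref{Relation} (with the roles of the indices $r$ and $n$ there played here by $n$ and $N$). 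Substituting \eqref{Relation} and trading the sum over $r$ for a sum over $N$ converts $T$ into
\[
T = \sum_{n=1}^{\infty}\sum_{N=1}^{\infty} z^{n+N}\sum_{\gamma=1}^{n}\beta_{n-\gamma,N}\,v_\gamma.
\]

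For each fixed $N$, the Cauchy product formula identifies $\sum_{n\geq 1} z^n\sum_{\gamma=1}^{n}\beta_{n-\gamma,N}v_\gamma$ with $A(z)\cdot \sum_{p\geq 0}\beta_{p,N}z^p$, so $T = A(z)\sum_{N=1}^{\infty} z^N\sum_{p\geq 0}\beta_{p,N}z^p$. Combining the three contributions and moving the coefficient of $A(z)$ to the left-hand side yields the stated identity. The main obstacle is the triple reindexing together with the recognition that the inner convolution matches \eqref{Relation} exactly; indeed this identification is what motivates the somewhat opaque definition of $\beta_{r,n}$. The hypothesis $v_r\neq 0$ ensures that \eqref{Relation} determines the $\beta_{r,n}$ uniquely via a lower-triangular inversion in $v_1, v_2, \ldots$, so the resulting functional equation is well-defined as a formal power series identity and no circularity arises.
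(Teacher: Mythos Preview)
Your argument is correct and follows essentially the same route as the paper: shift the recurrence to start at $r\ge 1$, multiply by $z^r$ and sum, handle the two easy terms directly, and for the double sum perform the index swap $(r,k,n)\mapsto(n,N)$ with $N=r-n+1$ so that the relation \eqref{Relation} turns the inner sum into a Cauchy convolution with $A(z)$. The only discrepancy is cosmetic: your Cauchy product naturally produces $\sum_{p\ge 0}\beta_{p,N}z^p$ rather than the $\sum_{r\ge 1}\beta_{r,n}z^r$ appearing in the statement, which is in fact the version consistent with \eqref{Relation} since $\beta_{0,n}$ does occur there.
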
  
\begin{remark}
    The assumption that $v_r \neq 0$ for all $r \geq 0$ is required so that the relation (\ref{Relation}) is well defined. As the reader can verify later, this assumption is not required in proving Corollaries \ref{Corollary 1} and \ref{Corollary 2}.
\end{remark}
\begin{proof}
    We first commute the summation indices in the recurrence (\ref{1.5}) to obtain
\begin{align*}
    v_r &= v_1 + C \sum_{k=1}^{r-1} \dfrac{1}{b(k,k+1)} + \sum_{k=1}^{r-1} v_kW_km_k \left( \sum_{n=k}^{r-1} \dfrac{1}{b(n,n+1)} \right) 
\end{align*}
for all $r \geq 1$. Multiply both sides by $z^r$ and summing to $r=\infty$ to get
\begin{align*}
    \sum_{r=1}^{\infty} v_rz^r &= \sum_{r=1}^{\infty} v_1 z^r + \sum_{r=1}^{\infty}\left( C \sum_{k=1}^{r-1} \dfrac{1}{b(k,k+1)} + \sum_{k=1}^{r-1} \left( \sum_{n=k}^{r-1} \dfrac{1}{b(n,n+1)} \right) v_k W_km_k \right) z^r \\ 
     &=\dfrac{v_1z}{1-z} + C \sum_{r=1}^{\infty}\left( \sum_{k=1}^{r-1} \dfrac{1}{b(k,k+1)} \right) z^r  \\ &+ \sum_{n=1}^{\infty} z^n \left( \sum_{r=1}^{\infty} W_rm_r \left( \sum_{\ell = r}^{r+n-1} \dfrac{1}{b(\ell, \ell + 1)} \right) v_r z^r \right) .
\end{align*}
By the Cauchy product formula, we decompose the third term on the right hand side to get
\begin{align*}
    \left( \sum_{r=1}^{\infty} W_rm_r \left( \sum_{\ell = r}^{r+n-1} \dfrac{1}{b(\ell, \ell + 1)} \right) v_r z^r \right)  = \left( \sum_{r=1}^{\infty} v_r z^r \right) \left(  \sum_{r=1}^{\infty} \beta_{r,n}z^r\right) = A(z) \cdot \sum_{r=1}^{\infty} \beta_{r,n}z^r,
\end{align*}
where $\beta_{r,n}$ is defined by \[ W_rm_r v_r \sum_{\ell =r}^{r+n-1} \dfrac{1}{b(\ell, \ell +1)}  = \sum_{\gamma=1}^r \beta_{r-\gamma, n} v_{\gamma}. \] Putting everything together,
\begin{align*}
    A(z) =  \dfrac{v_1z}{1-z} +  C \sum_{r=1}^{\infty} \left( \sum_{k=1}^{r-1} \dfrac{1}{b(k,k+1)} \right)  z^r  + \sum_{n=1}^{\infty} z^n A(z) \left( \sum_{r=1}^{\infty} \beta_{r,n} z^r \right)
\end{align*} 
which proves the theorem.
\end{proof}
With these tools, we present two special cases of the edge-weight and the potential function such that a closed form solution for the generating function can be derived. In the first case we consider a constant edge-weight $b(r,r+1) = b$ and product between potential and measure $W(r)m(r) = c$, and derive its generating function and a surprisingly complex power series representation. 
\begin{corollary}  \label{Corollary 1} \label{Theorem: Wolfram Alpha}Let $(\mathbb{N}_0,b,m)$ be a birth-death chain with $b(r,r+1) = b$ and $ W(r)m(r) = c$ for all $r \geq 0$. If $v: \mathbb{N}_0 \to \mathbb{R}$ is a function such that $(\Delta  + W) v = 0$, then the generating function $A(z) =  \sum_{r=1}^{\infty} v_rz^r$ has an explicit form \begin{align}
         A(z) &=  \dfrac{(1-z)^2}{(1-z)^2- \alpha z} \left( \dfrac{(v_1-v_0)z}{(1-z)^2}  + \dfrac{v_0z}{1-z} \right),
         \end{align}
         where $\alpha = c/b$.
\end{corollary}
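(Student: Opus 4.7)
The plan is to bypass the general machinery of Theorem \ref{Theorem generating function} (whose hypothesis $v_r \neq 0$ should not be needed here, as noted in the preceding remark) and exploit the very simple form that $(\Delta + W)v(r) = 0$ takes once all coefficients are constant.

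First, I would evaluate $(\Delta + W)v(r) = 0$ at an interior point $r \geq 1$. Multiplying through by $m(r)$ and using $b(r, r\pm 1) = b$ together with $W(r) m(r) = c$ collapses the Schr\"{o}dinger equation to the constant-coefficient three-term recurrence $v_{r+1} = (2+\alpha) v_r - v_{r-1}$ for $r \geq 1$, where $\alpha = c/b$. Since the recurrence is second order and only constrains $r \geq 1$, both $v_0$ and $v_1$ remain as independent initial data, matching the appearance of both in the statement.

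Next, I would multiply the recurrence by $z^r$, sum over $r \geq 1$, and rewrite the shifted sums in terms of $A(z) = \sum_{r=1}^{\infty} v_r z^r$ via the index shifts
\[
\sum_{r=1}^{\infty} v_{r+1} z^r = \frac{A(z) - v_1 z}{z}, \qquad \sum_{r=1}^{\infty} v_{r-1} z^r = z\bigl(v_0 + A(z)\bigr).
\]
Collecting terms yields the algebraic equation $A(z)\bigl[1 - (2+\alpha) z + z^2\bigr] = v_1 z - v_0 z^2$. Recognizing the denominator as $(1-z)^2 - \alpha z$ and splitting the numerator through the identity $v_1 z - v_0 z^2 = (v_1 - v_0) z + v_0 z (1-z)$ produces the factored form claimed in the corollary.

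The main obstacle is essentially bookkeeping rather than conceptual difficulty: one must track the boundary of each sum carefully (the $v_{r-1}$ shift is precisely where $v_0$ enters) and spot the partial-fraction-style decomposition that presents $(v_1 z - v_0 z^2)/(1-z)^2$ as $(v_1 - v_0) z/(1-z)^2 + v_0 z/(1-z)$, so that the two summands in the stated factorization appear explicitly. No appeal to the complicated coefficients $\beta_{r,n}$ of Theorem \ref{Theorem generating function} is required, which is consistent with the remark that the nonvanishing hypothesis on $v_r$ plays no role in this corollary.
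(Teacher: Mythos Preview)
Your argument is correct and reaches the stated formula, but it takes a genuinely different route from the paper's. The paper follows the framework of Theorem~\ref{Theorem generating function}: it starts from the full-history recurrence of Lemma~\ref{lemma: full history recurrence}, multiplies by $z^r$, sums, and manipulates the resulting double sum (which, thanks to $b(k,k+1)=b$ and $W_k m_k=c$, collapses to the Cauchy product $\bigl(\sum_n \alpha n z^n\bigr)\bigl(\sum_r v_r z^r\bigr)=\tfrac{\alpha z}{(1-z)^2}A(z)$), arriving directly at
\[
A(z)=\frac{(v_1-v_0)z}{(1-z)^2}+\frac{v_0 z}{1-z}+\frac{\alpha z}{(1-z)^2}A(z).
\]
You instead read off the constant-coefficient three-term recurrence $v_{r+1}=(2+\alpha)v_r-v_{r-1}$ straight from $(\Delta+W)v=0$ and apply the standard shift identities for generating functions. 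Your path is shorter and fully self-contained (no appeal to Lemma~\ref{lemma: full history recurrence} or the $\beta_{r,n}$ machinery), while the paper's computation has the virtue of exhibiting the corollary as a transparent specialization of the general Theorem~\ref{Theorem generating function}.
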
 
\begin{remark}
    We note that, by substituting the generating function in Corollary \ref{Theorem: Wolfram Alpha} into Wolfram Alpha, we find that it gives a power series representation \[ v_r = 
    \dfrac{2^{-r}}{\sqrt{  \beta_{\alpha} }} \left( 2 \left(  \left( \xi_{\alpha} -  \beta_{\alpha} \right)^{-1+r} -  \left( \xi_{\alpha} + \beta_{\alpha} \right)^{-1 + r} \right) v_0 +  \left( - \left(\xi_{\alpha} - \beta_{\alpha} \right)^r + \left( \xi_{\alpha} + \beta_{\alpha} \right)^r  \right)v_1  \right)
\]
for all $r \geq 3$, where $\beta_{\alpha} =  \sqrt{ 4  \alpha + \alpha^2}$ and $\xi_{\alpha} = 2 + \alpha$. 
\end{remark}
\begin{proof}
    Following the proof of Theorem \ref{Theorem generating function} and recalling that $C = b(v_1-v_0)$, we arrive at the relation \begin{align*}
    A(z) &= \sum_{r=1}^{\infty} v_rz^r \\ 
    &= \sum_{r=1}^{\infty} v_1 z^r + \sum_{r=1}^{\infty}\left( C  \left( \sum_{k=1}^{r-1} \dfrac{1}{b(k,k+1)} \right)  + \sum_{k=1}^{r-1} \left( \sum_{n=k}^{r-1} \dfrac{1}{b(n,n+1)} \right) v_k W_km_k \right) z^r 
    \\ &= 
    \sum_{r=1}^{\infty} v_1 z^r + \sum_{r=1}^{\infty} b(v_1-v_0) \dfrac{(r-1)}{b} z^r 
    + \sum_{n=1}^{\infty} z^n\left( \sum_{r=1}^{\infty} c \left( \sum_{\ell =r}^{r+n-1} \dfrac{1}{b} \right) v_r z^r \right) \\ &=   \sum_{r=1}^{\infty} v_1 z^r + \sum_{r=1}^{\infty} \left( (v_1-v_0)r - (v_1-v_0)\right) z^r +\sum_{n=1}^{\infty} z^n \left( \sum_{r=1}^{\infty}\dfrac{cn}{b}v_r z^r \right) \\ &=  (v_1-v_0)\sum_{r=1}^{\infty} r z^r + v_0\sum_{r=1}^{\infty}z^r + \left( \sum_{n=1}^{\infty}\alpha n z^n \right)  \left( \sum_{r=1}^{\infty} v_r z^r \right) \\ &= \dfrac{(v_1-v_0)z}{(1-z)^2} + \dfrac{v_0z}{1-z} + \dfrac{\alpha z}{(1-z)^2} A(z).
     \end{align*}
     We obtain the generating function by solving for $A(z)$. 
\end{proof} 
Next we consider the case of constant edge-weight but the product of Schr\"{o}dinger operator and measure satisfies a power law. Unlike the previous case, there is not a closed form solution for the generating function in general, however, we will show it satisfies a ordinary differential equation.
\begin{corollary}  \label{Corollary 2}
Let $(\mathbb{N}_0,b,m)$ be a birth-death chain with $b(r+1,r) = b$ for all $r \geq 0$. Assume that $W_rm_r = r^{\gamma}$ for some $\gamma \in \mathbb{N}$. If $v: \mathbb{N}_0 \to \mathbb{R}$ is a function such that $(\Delta  + W) v = 0$, then the generating function $A(z) = \sum_{r=1}^{\infty} v_r z^r$ is a solution to the $\gamma$th order differential equation  \[ A(z) = \dfrac{v_0z}{1-z} +  \dfrac{(v_1 - v_0)z}{(1-z)^2}+ \left( \dfrac{z/b}{(z-1)^2} \right) \sum_{k=0}^{\gamma} { \gamma \brace k} \left( \left( \dfrac{d}{dz} \right)^k A(z) \right) z^k, \]  
    where $ \displaystyle { \gamma \brace k} = \sum_{i=0}^k \dfrac{(-1)^{k-i} i^{\gamma} }{(k-i)!i!}$ are the Stirling numbers of second kind.
   
\end{corollary}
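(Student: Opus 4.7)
The plan is to mimic the derivation of Corollary \ref{Corollary 1}, substituting $W_r m_r = r^\gamma$ in place of the constant product and carrying the resulting factor $r^\gamma$ through the double sum until it can be re-expressed as a differential operator applied to $A(z)$. First I would start from the intermediate expression produced in the proof of Theorem \ref{Theorem generating function} (the same starting point used for Corollary \ref{Corollary 1}). Under the assumption $b(k,k+1)=b$, the inner sum $\sum_{\ell=r}^{r+n-1} 1/b(\ell,\ell+1)$ collapses to $n/b$, so that the two source terms simplify—exactly as in Corollary \ref{Corollary 1}—to $v_0 z/(1-z) + (v_1-v_0)z/(1-z)^2$.

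Next I would factor out the $r$-independent quantity $n/b$ and use the geometric identity $\sum_{n=1}^\infty n z^n = z/(1-z)^2$ to rewrite the interaction term as
\begin{align*}
\sum_{n=1}^\infty z^n \cdot \frac{n}{b} \sum_{r=1}^\infty r^\gamma v_r z^r = \frac{z}{b(1-z)^2} \sum_{r=1}^\infty r^\gamma v_r z^r.
\end{align*}
The key step is then to express $\sum_{r=1}^\infty r^\gamma v_r z^r$ in terms of ordinary derivatives of $A(z)$. For this I would invoke the classical identity
\begin{align*}
\left( z \frac{d}{dz} \right)^\gamma = \sum_{k=0}^\gamma {\gamma \brace k} z^k \left( \frac{d}{dz} \right)^k,
\end{align*}
which is provable by induction on $\gamma$ using the Stirling recurrence ${\gamma+1 \brace k} = k {\gamma \brace k} + {\gamma \brace k-1}$ (with base case $\gamma=0$ immediate). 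Applying both sides to $A(z) = \sum_{r \geq 1} v_r z^r$ yields $\sum_{r=1}^\infty r^\gamma v_r z^r = \sum_{k=0}^\gamma {\gamma \brace k} z^k (d/dz)^k A(z)$, and substitution into the previous display produces the stated equation after noting $(1-z)^2 = (z-1)^2$.

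The main challenge is purely bookkeeping: one must track the two distinct sources of linear-in-index behavior, namely the $n$ arising from the telescoped edge-weight sum and the $r^\gamma$ from the potential-measure product, and combine them correctly via the Stirling identity. Because the entire derivation takes place in the ring of formal power series, no convergence issues arise, and the Cauchy-product interchange of summations has already been justified in the proof of Theorem \ref{Theorem generating function}. As pointed out in the remark following Theorem \ref{Theorem generating function}, the hypothesis $v_r \neq 0$ is not needed here, since this approach bypasses the auxiliary coefficients $\beta_{r,n}$ altogether by factoring the $n$-dependence out of the inner sum before it interacts with $v_r$.
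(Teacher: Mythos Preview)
Your proposal is correct and follows essentially the same route as the paper: both start from the intermediate identity in Theorem~\ref{Theorem generating function}, collapse the edge-weight sums using $b(k,k+1)=b$ to produce the two source terms and the factor $z/(b(1-z)^2)$, and then convert $\sum_{r\ge 1} r^{\gamma} v_r z^r$ into derivatives of $A(z)$ via Stirling numbers. The only cosmetic difference is that the paper expands $r^{\gamma}=\sum_{k}{\gamma\brace k}(r)_k$ in falling factorials and then recognizes $\sum_r (r)_k v_r z^{r-k}=(d/dz)^k A(z)$, whereas you invoke the equivalent operator identity $(z\,d/dz)^{\gamma}=\sum_{k}{\gamma\brace k}z^k(d/dz)^k$ directly; these are two phrasings of the same fact.
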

\begin{remark}
    In particular, if $\gamma=1$, the solution is given by \[ A(z) = \exp{\left( \frac{z^2 - 1}{bz} \right)}  \dfrac{z^{- 2/b} }{b}\int^z \exp{ \left( \dfrac{1- \xi^2}{b \xi} \right) } \xi^{\frac{2}{b}-1} (\xi v_0 - v_1) d \xi .\]  
\end{remark} 
\begin{proof}
  We once again the follow the proof of Theorem \ref{Theorem generating function} to obtain the relation   
\begin{align*}
    A(z) &= \sum_{r=1}^{\infty} v_1 z^r + C \sum_{r=1}^{\infty} \left( \sum_{k=1}^{r-1} \dfrac{1}{b} \right) z^r +  \dfrac{1}{b} \cdot \left( \sum_{n=1}^{\infty} n z^n \right)  \left( \sum_{r=1}^{\infty}r^{\gamma} v_r z^r \right) \\ 
    &=  \dfrac{v_1 z}{1-z} + \sum_{r=1}^{\infty}  (b(v_1-v_0) \dfrac{(r-1)}{b}) z^r  +  \dfrac{1}{b} \cdot \left( \sum_{n=1}^{\infty} n z^n \right)  \left( \sum_{r=1}^{\infty}r^{\gamma} v_r z^r \right) \\
    &=  \dfrac{v_0z}{1-z} +  \dfrac{(v_1 - v_0)z}{(1-z)^2}+ \left( \dfrac{z/b}{(z-1)^2} \right) \left( \sum_{r=1}^{\infty}r^{\gamma} v_r z^r \right).
\end{align*} 
Let $(r)_k$ denote the falling factorial \[ (r)_k = \underbrace{r(r-1) \cdots (r-k+1)}_{k\text{ terms}}.\]
We can decompose $r^{\gamma}$ in terms of falling factorials with \[ r^{\gamma} = \sum_{k=0}^{\gamma}  {\gamma \brace k }  (r)_k  \] (see \cite{Wilf} for a detailed discussion on Stirling numbers).  
Substituting back to the equation yields 
\begin{align*}
    A(z) &=\dfrac{v_0z}{1-z} +  \dfrac{(v_1 - v_0)z}{(1-z)^2}+ \left( \dfrac{z/b}{(z-1)^2} \right)   \sum_{k=0}^{\gamma} { \gamma \brace k} \sum_{r=1}^{\infty} (r)_k v_r z^{r-k}z^k \\ 
    &=  \dfrac{v_0z}{1-z} +  \dfrac{(v_1 - v_0)z}{(1-z)^2}+ \left( \dfrac{z/b}{(z-1)^2} \right) \sum_{k=0}^{\gamma}  \sum_{r=1}^{\infty} { \gamma \brace k} (r)_k v_r z^{r-k}z^k \\
    &= \dfrac{v_0z}{1-z} +  \dfrac{(v_1 - v_0)z}{(1-z)^2}+ \left( \dfrac{z/b}{(z-1)^2} \right) \sum_{k=0}^{\gamma} { \gamma \brace k} \left( \left( \dfrac{d}{dz} \right)^k A(z) \right) z^k .
\end{align*}
This completes the proof.
\end{proof}

    \section*{Acknowledgments} 
    This research was conducted as part of the Queens Experience in Discrete Mathematics REU. The author gratefully acknowledges the support of mentor Prof. Rados{\l}aw K. Wojciechowski and program director Prof. Rishi Nath.

\begin{bibdiv}
\begin{biblist}

\bib{Cintio_2021}{article}{
 title={Self-adjointness in quantum mechanics: a pedagogical path},
   volume={8},
   ISSN={2196-5617},
   url={http://dx.doi.org/10.1007/s40509-021-00245-x},
   DOI={10.1007/s40509-021-00245-x},
   number={3},
   journal={Quantum Studies: Mathematics and Foundations},
   publisher={Springer Science and Business Media LLC},
   author={Cintio, Andrea} , author={Michelangeli, Alessandro},
   year={2021},
   pages={271–306} 
}

\bib{Grigoryan_2009}{book}{
  place={Providence (RI)}, title={Heat kernel and analysis on manifolds}, publisher={American Mathematical Society}, author={Grigoryan, Alexander}, year={2009}
}

\bib{Ham20}{article}{
    author={Hamburger, Hans},
   title={\"{U}ber eine Erweiterung des Stieltjesschen Momentenproblems},
   language={German},
   journal={Math. Ann.},
   volume={82},
   date={1920},
   number={1-2},
   pages={120--164},
   issn={0025-5831},
   review={\MR{1511978}},
   doi={10.1007/BF01457982},
}

\bib{IKMW}{article}{
  title={Essential self-adjointness of the Laplacian on weighted graphs: harmonic functions, stability, characterizations and capacity}, 
      author={Inoue,Atsushi}, 
      author={Ku,Sean}, 
      author={Masamune,Jun}, 
      author={Wojciechowski,Radosław K.},
      year={2024},
      eprint={arXiv:2404.12531 [math-FA]},
}

\bib{Mallik_1998}{article}{
 title={Solutions of linear difference equations with variable coefficients}, volume={222}, DOI={10.1006/jmaa.1997.5903}, number={1}, journal={Journal of Mathematical Analysis and Applications}, author={Mallik, Ranjan K}, year={1998}, pages={79–91}
}

\bib{Reed_Simon_1975}{book}{
  place={New York}, title={Methods of modern mathematical physics: II: Fourier analysis, self-adjointness}, publisher={Academic Press}, author={Reed, Michael}, author={ Simon, Barry}, year={1975}
}

\bib{Wilf}{book}{
  place={Boston}, title={Generatingfunctionology}, publisher={Academic Press}, author={Herbert S. Wilf},  year={1990}
}

\bib{YangLinAlg}{book}{
  place={New York}, title={A concise text on Advanced Linear Algebra}, publisher={Cambridge University Press}, author={Yang, Yisong},  year={2015}
}

\end{biblist}
\end{bibdiv}

\end{document}